\documentclass[12pt]{amsart}
\usepackage{fullpage}

\newcommand{\tbigwedge}{\smash{\raisebox{0.2ex}{\ensuremath{\textstyle{\bigwedge}}}}}
\DeclareMathOperator{\Tl}{T}
\DeclareMathOperator{\HH}{H}

\usepackage{amssymb,amsthm,amsmath,amstext,amsxtra}
\usepackage{bm}       
\usepackage{mathtools} 
\usepackage{booktabs}

\usepackage{enumerate}
\usepackage{ stmaryrd }
\usepackage{enumitem}
\usepackage{comment}
\usepackage{colonequals}
\usepackage{numprint}
\usepackage{tikz}
\usepackage{float} 
\usepackage{subcaption}
\usepackage{url}
\usepackage[style=alphabetic, backend=biber, sorting=nyt, doi=false, isbn=false,url=false, hyperref,backref,backrefstyle=none, maxnames=20, maxalphanames=20]{biblatex}
\newbibmacro{string+doiurl}[1]%
{%
  \iffieldundef{doi}%
  {%
    \iffieldundef%
    {url}
    {#1}
    {%
      \href{\thefield{url}}{#1}
    }%
  }%
  {%
    \href{https://doi.org/\thefield{doi}}{#1}%
  }%
}

\DeclareFieldFormat{title}{\usebibmacro{string+doiurl}{\mkbibemph{#1}}}
\DeclareFieldFormat[article,incollection,inproceedings,unpublished,misc,book]{title}%
    {\usebibmacro{string+doiurl}{\mkbibquote{#1}}}
\DefineBibliographyStrings{english}{%
  backrefpage = {$\uparrow$},%
  backrefpages = {$\uparrow$}%
}
\addbibresource{biblio.bib}
\usepackage{url}
\usepackage{hyperref}
\hypersetup{colorlinks=true,urlcolor=blue,citecolor=blue,linkcolor=blue}

\numberwithin{equation}{section}

\usepackage{thmtools}
\usepackage[nameinlink]{cleveref}


\theoremstyle{plain}
\newtheorem*{theorema*}{Theorem A}

\newtheorem{thm}[equation]{Theorem}


\newtheorem{lem}[equation]{Lemma}





\theoremstyle{definition}



\theoremstyle{remark}
\newtheorem{remark}[equation]{Remark}



\newenvironment{enumalph}
{\begin{enumerate}}
{\end{enumerate}}

\newcommand{\Z}{\mathbb{Z}}
\newcommand{\Q}{\mathbb{Q}}

\let\C\relax
\newcommand{\C}{\mathbb{C}}

\newcommand{\PP}{\mathbb{P}}


\DeclareMathOperator{\cond}{cond}
\DeclareMathOperator{\GO}{GO}
\DeclareMathOperator{\Jac}{Jac}

\DeclareMathOperator{\Ind}{Ind}

\DeclareMathOperator{\Galois}{Gal}
\newcommand{\Gal}[2]{\Galois({#1}\,|\,{#2})}


\newcommand{\et}{{\mathrm{\acute{e}t}}}

\DeclareMathOperator{\rk}{\mathrm{rk}}

\DeclareMathOperator{\NS}{\mathrm{NS}}

\DeclareMathOperator{\Pic}{\mathrm{Pic}}

\DeclareMathOperator{\alg}{{al}}
\newcommand{\al}{{\alg}}

\newcommand{\frakp}{\mathfrak{p}}

\newcommand{\frakN}{\mathfrak{N}}

\newcommand{\Fbar}{{F^\al}}

\usepackage{xcolor}
\definecolor{darkred}{HTML}{CC1F1F}
\definecolor{green}{rgb}{.4,.7,.4}
\definecolor{blue}{rgb}{.2,.6,.75}
\definecolor{pastelb}{HTML}{3333FF}

\definecolor{pastelyellow}{rgb}{0.992157, 0.552941, 0.235294}
\definecolor{pastelorange}{rgb}{0.941176, 0.231373, 0.12549}
\definecolor{pastelred}{rgb}{0.741176, 0., 0.14902}
\definecolor{darkbrown}{rgb}{0.25098, 0., 0.0745098}

\usepackage{hyperref}
\hypersetup{pdftitle={Explicit modularity of K3 surfaces with complex multiplication of large degree},
pdfauthor={Edgar Costa, Andreas-Stephan Elsenhans, J\"org Jahnel, John Voight}}
\hypersetup{colorlinks=true,linkcolor=blue,anchorcolor=blue,citecolor=blue}

\setcounter{tocdepth}{1}

\usepackage{arydshln} 

\begin{document}

\title[Explicit modularity of K3 surfaces with CM of large degree]{Explicit modularity of K3 surfaces \\ with complex multiplication of large degree}

\author[Costa]{Edgar Costa}
\address{Department of Mathematics, Massachusetts Institute of Technology, 77 Massachusetts Ave., MA 02139, USA}
\email{edgarc@mit.edu}
\urladdr{\url{https://edgarcosta.org}}

\author[Elsenhans]{Andreas-Stephan Elsenhans}
\address{School of Mathematics and Statistics, Carslaw Building (F07), University of Sydney, NSW 2006, Australia}
\address{Institut f\"ur Mathematik \\ Emil-Fischer Stra\ss e 30 \\ D-97074 W\"urzburg, Germany}
\email{Stephan.Elsenhans@uni-wuerzburg.de}
\urladdr{\url{https://www.mathematik.uni-wuerzburg.de/computeralgebra/team/elsenhans-stephan-prof-dr/}}

\author[Jahnel]{J\"org Jahnel}
\address{\mbox{Department Mathematik\\ Univ.\ \!Siegen\\ \!Walter-Flex-Str.\ \!3\\ D-57068 \!Siegen\\ \!Germany}}
\email{jahnel@mathematik.uni-siegen.de}
\urladdr{\url{http://www.uni-math.gwdg.de/jahnel}}

\author[Voight]{John Voight}
\address{Department of Mathematics, Dartmouth College, 6188 Kemeny Hall, Hanover, NH 03755, USA}
\address{School of Mathematics and Statistics, Carslaw Building (F07), University of Sydney, NSW 2006, Australia}
\email{jvoight@gmail.com}
\urladdr{\url{https://jvoight.github.io/}}


\begin{abstract}
We consider the transcendental motives of three K3 surfaces $X$ conjectured to have complex multiplication (CM).
Under this assumption, we match these to explicit algebraic Hecke quasi-characters $\psi_X$, and CM abelian threefolds $A$.
This provides substantial evidence that a power of $A$ corresponds to $X$ under the Kuga--Satake correspondence.
\end{abstract}


\maketitle


\section{Introduction}

K3 surfaces provide a rich class of objects to study in number theory and the Langlands program, testing conjectures that connect arithmetic geometry and automorphic forms through Galois representations and $L$-functions.

The case where the Picard number $\rho$ achieves its maximum $\rho=20$ has been well-studied.  Potential modularity was established through their association with algebraic Hecke quasi-characters (also called Hecke Grossencharacters or just Hecke characters) by Shioda--Inose \cite[\S 6, Theorem 6]{shioda-inose-77} (see also Livn\'e \cite{livne-95}): the transcendental cohomology has complex multiplication (CM) by an imaginary quadratic field.  Over $\Q$, an explicit correspondence with classical modular forms of weight $3$ was worked out by Elkies--Sch\"{u}tt \cite{elkies-schuett-13}.

Recent efforts towards incorporating K3 surfaces into the \emph{$L$-functions and Modular Forms Database (LMFDB)} \cite{lmfdb} has renewed questions of explicit modularity for K3 surfaces, but less is known about modularity for K3 surfaces of lower Picard number.  In general, a complex K3 surface $X$ with $\rho(X) \leq 16$ does not admit a Shioda--Inose structure.  Piatetski-Shapiro--Shafarevich \cite{MR335521} expressed the $L$-function of a K3 surfaces with complex multiplication as a product of Hecke $L$-functions over some finite extension via the Kuga--Satake correspondence and applying the corresponding statement for abelian varieties.  The theory of complex multiplication for K3 surfaces was further developed by Rizov \cite{rizov}.  Building on this work, Valloni \cite{valloni-23} considers K3 surfaces with CM by the full ring of integers and studied their fields of definition; and more recently, Ito \cite{ito2025motivicmodularitycmk3} is more explicit about the properties of the Hecke quasi-characters that appear in the equality of $L$-series.  Livn\'e--Sch\"utt--Yui \cite{livne-schuett-yui-10} established modularity for the finitely many K3 Delsarte surfaces (up to twist): they are (CM) quotients of Fermat surfaces. 

%
%
%

This paper advances these efforts in a new direction, through computation.
We remain focused on explicit examples of K3 surfaces with apparent CM of large degree.  Indeed, there has been recently renewed interest \cite{bayer-geemen-schuett-24} in moduli of K3 surfaces with extra Hodge endomorphisms.
Our main result matches the transcendental cohomology of certain K3 surfaces with algebraic Hecke quasi-characters, as follows.
For a complex surface $X$, we let $\Tl(X)_\Q \subseteq \HH^2(X,\Q)$ be the transcendental subspace (see \cref{sec:setup}).
If moreover $X$ is defined over a number field $F \subset \C$, then for $\ell$ prime, we have via comparison $\Tl(X) \otimes \Q_\ell \hookrightarrow \HH^2_{\textup{\'et}}(X,\Q_\ell)$ and we let $\rho_{\Tl(X),\ell} \colon \Galois_{F} \circlearrowright (\Tl(X)\otimes{\Q_\ell})$ be the associated Galois representation.

Let $X=X_i$ for $i=1,2,3$ be one of the three K3 surfaces obtained from the following affine models:
\begin{equation}    \label{eqn:defnX}
  \begin{aligned}
    X_1 \colon w^2 &= x y z (x^3 - 3xy^2 + y^3 - 3x^2z - 3xyz + 9y^2z + 6yz^2 + z^3) \\
    X_2 \colon w^2 &= x y z (7x^3 - 7x^2y + y^3 + 49x^2z - 21xyz - 7y^2z + 98xz^2 + 49z^3) \\
    X_3 \colon w^2 &= x y z \left(
      \text{
\begin{tabular}{@{}c@{}}
  $49x^3 - 304x^2y + 361xy^2 + 361y^3 + 570x^2z  - 2793xyz$  \\  $+ 2888y^2z + 2033xz^2 - 5415yz^2 + 2299z^3$
\end{tabular}}
  \right).
  \end{aligned}
\end{equation}
More precisely, we take $X_i$ to be the smooth projective surface obtained from the taking branched double cover of $\PP^2$ defined by \eqref{eqn:defnX} and blowing up the $15=\binom{6}{2}$ double points in the branch locus of $6$ lines.  Then $\dim_{\Q_\ell} \Tl(X_i)=22-16=6$, and there is substantial numerical evidence that in each case, $\Tl(X_i)_{\Q}$ has CM by $K_i$, where $K_i=F_i(\sqrt{-1})$ is the cyclic sextic field defined in \Cref{table:defnKandC} by their LMFDB label.  For this evidence, see Elsenhans--Jahnel \cite[\S 5]{elsenhans-jahnel-16} and the end of \cref{sec:setup}.  

\begin{thm} \label{thm:mainthm}
  For $i=1,2,3$ and $X=X_i$, the following statements hold.
  \begin{enumalph}
    \item Suppose $\Tl(X)_\Q$ has CM by $K$.
      Then for all primes $\ell$,
      \begin{equation}
        \rho_{\Tl(X),\ell} \simeq \Ind_{\Galois_K}^{\Galois_\Q} \psi_{X, \ell}
      \end{equation}
      where $\psi_X$ is of $\infty$-type $\{(0,2),(1,1),(1,1)\}$ defined in \textup{\Cref{table:psiX}}.
      In particular, we have 
      \begin{equation}
      L(\Tl(X),s)=L(s,\psi_X).
      \end{equation}

    \item Let $A=A_i=\Jac(C_i)$ be the Jacobian defined in \textup{\Cref{table:defnKandC}}.
      Then
      \begin{equation}
      \begin{aligned}
        \rho_{\HH^1(A),\ell} &\simeq \Ind_{\Galois_K}^{\Galois_\Q} \psi_{A, \ell} \\
        L(\HH^1(A),s) &=L(s,\psi_A)
        \end{aligned}
      \end{equation}
      where $\psi_A$ is of $\infty$-type $\{(0,1),(0,1),(0,1)\}$ defined in \textup{\Cref{table:psiA}}.
    \item We have
      \begin{equation}
        \begin{aligned}
          \rho_{\HH^2(A),\ell} &\simeq \Ind_{\Galois_F}^{\Galois_\Q} \Q_\ell(1) \oplus \Ind_{\Galois_K}^{\Galois_\Q} (\psi_X \oplus \psi')_\ell
          \\
          L(\HH^2(A),s) &= \zeta_F(s+1) L(s,\psi_X) L(s,\psi'),
        \end{aligned}
      \end{equation}
      where $\Q_\ell(1)$ is the Tate twist, $F \subseteq K$ is the unique cubic subfield, and $\psi'$ is of $\infty$-type $\{(0,2),(0,2),(1,1)\}$ defined in \textup{\Cref{table:psi'}}.
  \end{enumalph}
\end{thm}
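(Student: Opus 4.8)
Each of the three parts is reduced to the explicit arithmetic of algebraic Hecke characters of $K$, which is then pinned down by a finite computation.

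\emph{(a)} Granting that the Hodge structure $\Tl(X)_\Q$ has CM by $K$, the motive $\Tl(X)$ is of CM type: its Hodge structure is a direct sum of one-dimensional $K$-Hodge structures, so $\rho_{\Tl(X),\ell}$ is semisimple and $\rho_{\Tl(X),\ell}|_{\Galois_K}$ is a direct sum of Galois characters which, by the theory of CM K3 surfaces \cite{rizov}, \cite{valloni-23} (or by passing through the Kuga--Satake abelian variety and using the classical description of $\HH^1$ of CM abelian varieties), are locally algebraic and assemble, as $\ell$ varies, into a compatible system of algebraic Hecke characters of $K$. Their archimedean types are determined by the Hodge decomposition of $\Tl(X)\otimes\C$, which has $h^{2,0}=h^{0,2}=1$ and $h^{1,1}=4$, together with the $K$-action; since $\Gal{K}{\Q}$ acts simply transitively on the six complex embeddings of $K$ and the type $(0,2)$ occurs with multiplicity one, the Hecke character $\psi$ so produced has trivial stabilizer in $\Gal{K}{\Q}$, whence $\rho_{\Tl(X),\ell}\simeq\Ind_{\Galois_K}^{\Galois_\Q}\psi$ is irreducible of $\infty$-type $\{(0,2),(1,1),(1,1)\}$. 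To identify $\psi$ with the $\psi_X$ of \Cref{table:psiX}, one bounds its conductor by a modulus supported on the primes of bad reduction of $X$ with bounded exponents; any two algebraic Hecke characters of the prescribed $\infty$-type and bounded modulus differ by a finite-order character, which is determined by its values on finitely many primes generating the relevant ray class group. One then computes $\tr\rho_{\Tl(X),\ell}(\Frob_p)$ for enough small primes $p$ of good reduction --- extracted from $\#X(\F_p)$ after removing the contribution of the Néron--Severi classes, whose lattice and Galois action are known --- and checks agreement with $\psi_X$. Since both sides form compatible systems, this forces $\rho_{\Tl(X),\ell}\simeq\Ind_{\Galois_K}^{\Galois_\Q}\psi_X$ for every $\ell$ (by Chebotarev and semisimplicity), and then $L(\Tl(X),s)=L(s,\psi_X)$.

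\emph{(b)} Here $A=\Jac(C)$ is an explicit abelian threefold, and one first establishes that $A$ has CM by the maximal order $\Z_K$; for the curves $C_i$ of \Cref{table:defnKandC} this can be exhibited from their automorphism groups (equivalently, from a decomposition of $\Jac(C_i)$). Then $\HH^1(A)$ is a CM motive, so $\rho_{\HH^1(A),\ell}\simeq\Ind_{\Galois_K}^{\Galois_\Q}\psi_A$ for an algebraic Hecke character $\psi_A$ whose $\infty$-type is a CM type $\{(0,1),(0,1),(0,1)\}$ of $K$. As in (a), $\psi_A$ is determined up to a finite-order twist by its $\infty$-type and conductor bound, and computing the zeta functions of $C$ over $\F_p$ for enough small $p$ --- which give $\Frob_p$ on $\HH^1(A)$, hence, via the $\Z_K$-action, the values of $\psi_A$ --- singles out the $\psi_A$ of \Cref{table:psiA}; as before this holds for every $\ell$ and yields $L(\HH^1(A),s)=L(s,\psi_A)$.

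\emph{(c)} For an abelian variety, $\HH^2(A)\cong\tbigwedge^2\HH^1(A)$ as Galois modules, so by (b) $\rho_{\HH^2(A),\ell}\simeq\tbigwedge^2\bigl(\Ind_{\Galois_K}^{\Galois_\Q}\psi_A\bigr)$. We compute this plethysm by Mackey theory for the normal subgroup $\Galois_K\trianglelefteq\Galois_\Q$ with cyclic quotient $\Gal{K}{\Q}=\langle g\rangle\cong\Z/6\Z$: the restriction to $\Galois_K$ is $\bigoplus_{0\le i<j\le 5}\psi_A^{g^i}\psi_A^{g^j}$, and $\langle g\rangle$ permutes these $15$ characters in orbits indexed by the $\Gal{K}{\Q}$-orbits of $2$-subsets of $\Z/6\Z$, of sizes $3$, $6$, and $6$. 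The three products $\psi_A^{g^i}\psi_A^{g^{i+3}}=\psi_A^{g^i}(\psi_A^{g^i})^{c}$ (indexed by the conjugate pairs of embeddings, with $c$ complex conjugation) all equal the norm character of $\Galois_K$ --- the usual Weil pairing computation for a CM abelian variety over $\Q$ --- so this summand is $\Q_\ell(1)$ tensored with the $3$-dimensional permutation representation of $\Gal{K}{\Q}$ on the conjugate pairs, namely $\Ind_{\Galois_F}^{\Galois_\Q}\Q_\ell(1)$, with $L$-factor $\zeta_F(s+1)$. Evaluating the $\infty$-types of the two remaining orbits against the CM type of $A$ recorded in \Cref{table:defnKandC} shows that one of them is $\Ind_{\Galois_K}^{\Galois_\Q}(\psi_A\psi_A^{g})$, of $\infty$-type $\{(0,2),(0,2),(1,1)\}$ --- take this to be $\Ind_{\Galois_K}^{\Galois_\Q}\psi'$ with $\psi'$ the character of \Cref{table:psi'} --- and the other is $\Ind_{\Galois_K}^{\Galois_\Q}(\psi_A\psi_A^{g^2})$, of $\infty$-type $\{(0,2),(1,1),(1,1)\}$. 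The latter is identified with $\rho_{\Tl(X),\ell}$: under the CM hypothesis this follows from (a) by checking that $\psi_X$ and $\psi_A\psi_A^{g^2}$ lie in a single $\Gal{K}{\Q}$-orbit (a finite comparison of the tables), and unconditionally by a Faltings--Serre argument matching the two six-dimensional representations after verifying equality of Frobenius at enough primes via point counts on $X$ and on $A$. Collecting the three summands gives the displayed decomposition of $\rho_{\HH^2(A),\ell}$, and taking Euler factors gives the factorization of $L(\HH^2(A),s)$.

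\emph{Main obstacle.} The one genuinely nonformal step throughout is the passage from numerical to proven equality of Hecke characters: controlling conductors so that the ambiguous finite-order twist is constrained to an explicit finite ray class group, and then discharging the identifications by an effective Frobenius-trace (point count) computation. In (a) and (c) this also requires an explicit description of $\NS(X_i)$ and of the Galois action on it, needed to isolate the transcendental part of $\Frob_p$ from $\#X_i(\F_p)$.
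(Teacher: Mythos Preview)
Your proof follows the paper's approach: for (a) and (b), use CM theory to realize the representation as induced from an algebraic Hecke character of the correct $\infty$-type, bound the conductor, enumerate the finitely many candidates, and single out the right one by matching Euler factors at small primes; for (c), decompose $\tbigwedge^2\bigl(\Ind_{\Galois_K}^{\Galois_\Q}\psi_A\bigr)$ via Mackey theory into the three displayed pieces. Your Mackey computation is in fact more explicit than the paper's (which simply records the identity $\psi_X(\frakp)=\psi_A(\sigma_1(\frakp))\psi_A(\sigma_2(\frakp))$ for the two order-$3$ elements $\sigma_1,\sigma_2$).

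Two points deserve comment. First, your phrase ``bounded exponents'' hides the one nonobvious lemma the paper actually proves: the Dirichlet character $\chi$ attached to $\psi$ has $\Q(\chi)\subseteq M$, the field of values of $\psi$, so its order is bounded a priori; combined with the bounded number of invariant factors of $(\Z_{K,\frakp}/\frakp^e)^\times$ this yields the exponent bound. You acknowledge the step as the ``main obstacle'' but do not supply the mechanism. Second, your claim that the identification of the third summand in (c) with $\rho_{\Tl(X),\ell}$ can be made unconditional ``by a Faltings--Serre argument'' is an overreach: the paper explicitly flags this as difficult future work, and its proof of (c), like your primary route, passes through (a) and therefore remains conditional on the CM hypothesis for $\Tl(X)_\Q$.
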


This provides substantial evidence that a power of $A$ corresponds to $X$ under the Kuga--Satake correspondence \cite{kuga-satake-67}: for more, see \Cref{rmk:kugasatake}.  
Our computations are performed in \textsf{Magma} \cite{magma}; the code is available at \url{https://github.com/edgarcosta/K3withCM/}.  There is a natural Galois action on algebraic Hecke quasi-characters by $\psi^\sigma = \psi \circ \sigma$ for $\sigma \in \Gal{K}{\Q}$, with $L(s,\psi^\sigma)=L(s,\psi)$. 
Up to this Galois action, the characters $\psi_X$, $\psi_A$, and $\psi'$ in \Cref{thm:mainthm} are unique.

\begin{remark}
To each Hecke quasi-character $\psi$ for a CM extension $K \supseteq F$, by restriction of the automorphic representation we can also associate a Hilbert modular form $f$ over $F$ with matching Galois representation and $L$-function.  As such forms $f$ have nontrivial character (and in parts (a) and (c), weights $(3,3,1)$ and $(3,1,1)$, respectively), they currently fall outside the database of Hilbert modular forms in the LMFDB.  We hope to see them in a future expansion of the database.
\end{remark}


\Cref{table:defnKandC} comes from Weng \cite[\S6]{weng-01} and is certified correct \cite{costa-mascot-sijsling-voight-18}.
For the fourth row ($i=4$), we were not able to find a matching K3 surface (among double covers of $\PP^2$ branched along $6$ lines, possibly due to the nontrivial class group), but part (b) still holds; it would be interesting to produce a K3 surface in this case (not necessarily a degree 2 model).

\subsection*{Acknowledgements}
We thank Eran Assaf and David Roe for helpful conversations and Eva Bayer-Fluck\-iger and the anonymous referees for corrections and suggestions.  Costa (SFI-MPS-Infra\-structure-00008651, AS) and Voight (SFI-MPS-Infra\-structure-\-00008650, JV) were supported by Simons Foundation grants.

\begin{table}\renewcommand{\arraystretch}{1.1}
  \begin{tabular}{c|ccc}
    $i$ & $K_i$ & $F_i$ & Defining equation for $C_i$ \\
    \hline\hline
$1$ & \href{https://www.lmfdb.org/NumberField/6.0.419904.1}{\textsf{6.0.419904.1}} 
& \href{https://www.lmfdb.org/NumberField/3.3.81.1}{\textsf{3.3.81.1}}
& $y^2 =x^7 + 6x^5 + 9x^3 + x$\\

$2$ & \href{https://www.lmfdb.org/NumberField/6.0.153664.1}{\textsf{6.0.153664.1}} & 
\href{https://www.lmfdb.org/NumberField/3.3.49.1}{\textsf{3.3.49.1}} &
$y^2 =x^7 + 7x^5 + 14x^3 + 7x$\\
$3$ & \href{https://www.lmfdb.org/NumberField/6.0.8340544.1}{\textsf{6.0.8340544.1}} & 
\href{https://www.lmfdb.org/NumberField/3.3.361.1}{\textsf{3.3.361.1}} &
$y^2 =x^7 + 1786x^5 + 44441x^3 + 278179x$\\
\hdashline
$4$ & \href{http://www.lmfdb.org/NumberField/6.0.59105344.1}{\textsf{6.0.59105344.1}} &
\href{http://www.lmfdb.org/NumberField/3.3.961.1}{\textsf{3.3.961.1}} &
$y^2 =x^7 + 961x^5 - 3694084x^3 + 1832265664x$
  \end{tabular}\vspace{-2ex}
  \caption{Polynomials defining CM fields and genus 3 curves.}
  \label{table:defnKandC}
\end{table}

\begin{table}\renewcommand{\arraystretch}{1.1}
    \begin{tabular}{r|cccc}
    $i$ & $\cond(\psi_{X_i})$ & $p$ & $L_p(\psi_{X_i}, T)$\\
    \hline\hline
$1$ & \textsf{64.1} & $17$ & $1 - 6   T + 15   p T^2 + 12    {p}^{2} T^{3} + 15    {p}^{3} T^{4} - 6    {p}^{4} T^{5} +  {p}^{6} T^{6}$\\
$2$ & \textsf{3136.1} & $13$ & $1 - 2   T + 19   p T^2 + 4    {p}^{2} T^{3} + 19    {p}^{3} T^{4} - 2    {p}^{4} T^{5} +  {p}^{6} T^{6}$\\
$3$ & \textsf{23104.1} & $37$ & $1 + 14   T - 5   p T^2 - 28    {p}^{2} T^{3} - 5    {p}^{3} T^{4} + 14    {p}^{4} T^{5} +  {p}^{6} T^{6}$\\
\hdashline 
$4$ & \textsf{61504.13} & $29$ & $1 - 38   T - 9   p T^2 + 52    {p}^{2} T^{3} - 9    {p}^{3} T^{4} - 38    {p}^{4} T^{5} +  {p}^{6} T^{6}$
  \end{tabular}\vspace{-2ex}
  \caption{Uniquely defining properties of $\psi_X$, up to $\Gal{K}{\Q}$.}
  \label{table:psiX}
\end{table}

\begin{table}\renewcommand{\arraystretch}{1.1}
    \begin{tabular}{r|cccc}
    $i$ & $\cond(\psi_{A_i})$ & $p$ & $L_p(\psi_{A_i}, T)$\\
    \hline\hline
$1$ & \textsf{4096.1} & $17$ & $1 - 6   T + 15    T^{2} - 52    T^{3} + 15   pT^4 - 6    {p}^{2} T^{5} +  {p}^{3} T^{6}$\\
$2$ & \textsf{25088.1} & $13$ & $1 + 4   T + 7    T^{2} + 40    T^{3} + 7   pT^4 + 4    {p}^{2} T^{5} +  {p}^{3} T^{6}$\\
$3$ & \textsf{184832.1} & $37$ & $1 + 4   T + 15    T^{2} - 152    T^{3} + 15pT^4    + 4    {p}^{2} T^{5} +  {p}^{3} T^{6}$\\
\hdashline 
$4$ & \textsf{3936256.41} & $29$ & $1 + 4   T + 51    T^{2} + 216    T^{3} + 51pT^4   + 4    {p}^{2} T^{5} +  {p}^{3} T^{6}$
  \end{tabular}\vspace{-2ex}
  \caption{Uniquely defining properties of $\psi_A$, up to $\Gal{K}{\Q}$.}
  \label{table:psiA}
\end{table}

\begin{table}\renewcommand{\arraystretch}{1.1}
  \begin{tabular}{r|cccc}
    $i$ & $\cond(\psi'_i)$ & $p$ & $L_p(\psi'_i, T)$\\
    \hline\hline
$1$ & \textsf{1.1} & $17$ & $1 + 42T + 1023T^{2} + 1132pT^3 + 1023p^2T^{4} + 42p^4 T^{5} +  p^6 T^{6}$ \\
$2$ & \textsf{1.1} & $13$ & $1 + 34   T + 631    T^{2} + 652   p T^3 + 631    {p}^{2} T^{4} + 34    {p}^{4} T^{5} +  {p}^{6} T^{6}$\\
$3$ & \textsf{1.1} & $37$ & $1 + 82   T + 4423    T^{2} + 5452   pT^3 + 4423    {p}^{2} T^{4} + 82    {p}^{4} T^{5} +  {p}^{6} T^{6}$\\
\hdashline 
$4$ & \textsf{1.1} & $29$ & $1 + 74   T + 3067    T^{2} + 3268   pT^3 + 3067    {p}^{2} T^{4} + 74    {p}^{4} T^{5} +  {p}^{6} T^{6}$
\end{tabular}
  \caption{Uniquely defining properties of $\psi'$, up to $\Gal{K}{\Q}$.}
  \label{table:psi'}
\end{table}

\section{Setup} \label{sec:setup}

Let $X$ be a polarized K3 surface over a number field $F$.
We denote by $X^\al$ its base change to $F^\al$, an algebraic closure of $F$.
We are interested in studying the Galois representations that arise from $\HH^2 _\et (X^\al, \Q_\ell)$, for a prime $\ell$.
Let $\NS(X)$ denote the N\'{e}ron--Severi group of $X$.
Under the canonical isomorphism $\NS(X) \cong \Pic(X)$, we may identify $\NS(X^\al) \cong \HH^2(X_\C, \Z) \cap \HH^{1,1}(X, \C) \subsetneq \HH^2(X_\C, \Z)$.  Let $\rho(X) \colonequals \rk \NS(X)$ be the Picard number.

Let $\Tl(X)$ be the transcendental lattice of $X$, the orthogonal complement of $\NS(X_{\C})$ in $\HH^2(X, \Z)$.
The space $\Tl(X)$ is a sub-Hodge structure of $\HH^2(X_\C, \Z)$ with Hodge numbers $(1, 20 - \rho(X), 1)$.
Let $E=E(X)$ be the algebra of endomorphisms of $\Tl(X)$ that respect the Hodge structure.
Zarhin \cite[Theorems 1.5.1, 1.6]{zarhin-83} has shown that $E$ is either a totally real field or a CM field.

The Galois representation
$
\rho_{\HH^2, \ell} \colon \Gal{\Fbar}{F} \rightarrow \GO( \HH^2(X_\C, \Z) \otimes \Q_\ell )
$
decomposes as $\rho_{\HH^2, \ell} = \rho_{\NS, \ell} \oplus \rho_{\Tl, \ell}$, and we focus on
\begin{align*}
\rho_{\Tl, \ell} \colon & \Gal{\Fbar}{F}  \rightarrow \GO(\Tl(X) \otimes \Q_\ell).
\end{align*}
and its associated \cite{serre-70} $L$-function $L(\Tl(X),s)$.  In the case that $\dim_{E} \Tl(X) = 1$, in fact $E$ is necessarily a CM field, and by class field theory we have $L(\Tl(X), s) = L(s,\psi)$ for some algebraic Hecke quasi-character $\psi$ over $E$.

We consider K3 surfaces $X \rightarrow \PP^2$ as (resolutions of) branched over $6$ lines in general (and in particular in good) position.  In this case, $\rho(X) \geq 16$, with equality when the lines are in very general position.  
We restrict to the K3 surfaces identified in \eqref{eqn:defnX}.  As mentioned in the introduction, there is strong numerical evidence \cite[\S 5]{elsenhans-jahnel-16} that these K3 surfaces have complex multiplication (CM).  We further computed 100 digit approximations to the period lattices using the method of Elsenhans--Jahnel \cite[\S 6]{elsenhans-jahnel-18}.  In fact, this CM is apparently by the maximal order $\Z_{K_i}$ in each case. More precisely, for each surface we found numerical approximations of six period integrals $\tau_1,\ldots, \tau_6$
that form a basis of the period lattice such that the ratios $\tau_i/\tau_1$ for $i=1,\dots,6$ coincide with a $\Z$-basis of the maximal order of the conjectural endomorphism field.
For the surface $X_1$ and for chosen cycles,
$$(\tau_1,\ldots,\tau_6) \approx
( 2.6402, 11.6474, 7.60232 , -7.6023  i, -4.96206 i, -6.68537  i );
$$
with respect to the eigenvalues $0.467911$ and $i$, the period vector is an eigenvector of
\scalebox{0.6}{$
\left(
\begin{array}{cccccc} 
2  &  -1  &  1  &    &    &    \\ 
 -1  &  2  &  -2  &    &    &    \\ 
 0  &  -1  &  2  &    &    &    \\ 
   &    &    &  2  &  -1  &  -1  \\ 
   &    &    &  -1  &  2  &  0  \\ 
   &    &    &  -2  &  1  &  2  
\end{array} 
\right), ~
\left(
\begin{array}{cccccc} 
   &    &    &  -1  &  1  &  0  \\ 
   &    &    &  0  &  -1  &  -1  \\ 
   &    &    &  -1  &  0  &  0  \\ 
 0  &  0  &  1  &    &    &    \\ 
 -1  &  0  &  1  &    &    &    \\ 
 1  &  1  &  -1  &    &    &    \\ 
\end{array}
\right)$}
and
the cup form is 
\scalebox{0.6}{$\left(
\begin{array}{cccccc}
-1 & 0 & 1 &  &  &  \\
 0 & 0 & 1 &  &  &  \\
 1 & 1 &-1 &  &  &  \\
  &  & & 0 & 1 & 1 \\
  &  &  & 1 &-1 & 0 \\
  &  &  & 1 & 0 & 0 \\
\end{array}
\right)$}.
The data for the other examples are very similar.  Computing periods to a precision of 100 decimal places took about half an hour on a standard desktop.  
 
\section{Proof of main result}

Under the assumption that the $L$-function matches an algebraic Hecke quasi-character, to find the correct one we need to bound its conductor.  It seems difficult in general to obtain such a bound by computing the conductor of the $L$-function of the K3 surface.
We can however produce a finite list of possibilities as follows.
We start with the list of bad primes of the K3 surface and the primes above them in $K$. 
To bound the exponents of these primes, recall that (by the $\mathfrak{p}$-adic logarithm) the unit groups $(\Z_{K,\frakp}/\frakp^e)^\times$ as $e \to \infty$ have a bounded number of invariant factors.  So to show that the exponent is bounded, we just need to show that the order of the finite part of the Hecke quasi-character is bounded, using the following lemma.

\begin{lem} \label{lemma:bound}
Let $\psi$ be an algebraic Hecke quasi-character over $K$ of modulus $\frakN$ and let $M \subset \C$ be the field generated by the values of $\psi$.  Let $\chi \colon (\Z_K/\frakN)^\times \to \C^\times$ be the Dirichlet character defined by $\chi(a)=\psi(a\Z_K)\psi_\infty(a)$.  Then $\Q(\chi) \subseteq M$.
\end{lem}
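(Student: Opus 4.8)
The plan is to prove directly that every value $\chi(a)$ lies in $M$; since $\Q(\chi)$ is generated over $\Q$ by these values, that gives $\Q(\chi)\subseteq M$ at once. So fix $a\in\Z_K$ prime to $\frakN$ — every class in $(\Z_K/\frakN)^\times$ has such a representative, and the image of $\chi$ consists exactly of the numbers $\chi(a)$ for such $a$. The ideal $a\Z_K$ lies in the domain of $\psi$, hence $\psi(a\Z_K)\in M$ by the definition of $M$ as the field generated by the values of $\psi$. It therefore suffices to show that $\psi_\infty(a)\in M$, for then $\chi(a)=\psi(a\Z_K)\,\psi_\infty(a)\in M$.

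To treat $\psi_\infty(a)$, I would first invoke the defining property of an algebraic Hecke quasi-character of modulus $\frakN$: there is a finite-index subgroup $P\le K^\times$, all of whose elements are prime to $\frakN$, with $\psi(\alpha\Z_K)=\psi_\infty(\alpha)^{-1}$ for every $\alpha\in P$. (This is exactly the congruence relation that makes $\chi$ well defined on $(\Z_K/\frakN)^\times$.) Thus $\psi_\infty(\alpha)=\psi(\alpha\Z_K)^{-1}\in M$ for all $\alpha\in P$; being products of conjugates of $\alpha$, these values also lie in $\overline{\Q}$. Now write $\psi_\infty=\prod_{\sigma}\sigma^{\,n_\sigma}$, the product over the embeddings $\sigma\colon K\hookrightarrow\C$, with $(n_\sigma)$ the $\infty$-type. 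Let $\tau\in\Gal{\overline{\Q}}{M\cap\overline{\Q}}$. Applying $\tau$ to $\psi_\infty(\alpha)$ for $\alpha\in P$ gives $\prod_\sigma(\tau\sigma)(\alpha)^{n_\sigma}=\prod_\sigma\sigma(\alpha)^{n_\sigma}$ for all $\alpha\in P$; that is, the two algebraic characters $\psi_\infty^{\,\tau}:=\prod_\sigma(\tau\sigma)^{n_\sigma}$ and $\psi_\infty$ of the torus $T:=\Res_{K/\Q}\Gm$ agree on $P$. Since $P$ has finite index in $T(\Q)=K^\times$, it is Zariski dense in the connected group $T$, and hence $\psi_\infty^{\,\tau}=\psi_\infty$ identically. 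Evaluating at $a$ gives $\tau(\psi_\infty(a))=\psi_\infty(a)$; as $\tau$ was arbitrary, $\psi_\infty(a)\in M\cap\overline{\Q}\subseteq M$. Conceptually this step says that $M$ contains the reflex field of the $\infty$-type of $\psi$, which is precisely where $\psi_\infty$ takes its values. Together with $\psi(a\Z_K)\in M$ this yields $\chi(a)\in M$, and the lemma follows.

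The argument is short, but two points deserve attention rather than any genuine difficulty. First, I must record the normalization relating $\psi$ on principal ideals to $\psi_\infty$ in exactly the form compatible with the stated definition of $\chi$ — in particular whether archimedean places are part of the modulus $\frakN$, and the placement of the inverse — which is the relation $\psi(\alpha\Z_K)=\psi_\infty(\alpha)^{-1}$ on $P$ used above. Second, the Zariski-density step: the Zariski closure of a finite-index subgroup of $K^\times$ is a closed subgroup of finite index in $T=\Res_{K/\Q}\Gm$, hence all of $T$ because $T$ is connected. An alternative, essentially equivalent route is to pass to an $\ell$-adic realization $\psi_\lambda\colon\Galois_K\to M_\lambda^\times$ of $\psi$ (for $\ell$ prime to $\frakN$) and read $\chi$ off from its restriction to the inertia subgroups at the primes above $\frakN$, where it has finite order, via local class field theory; but verifying that the resulting roots of unity lie in $M$ and not merely in $M_\lambda$ comes down to the same computation, so I would present the direct argument above.
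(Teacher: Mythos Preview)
Your argument is correct and follows the same line as the paper's: both reduce the lemma to showing $\psi_\infty(a)\in M$ and then conclude from $\chi(a)=\psi(a\Z_K)\,\psi_\infty(a)$; the paper simply invokes the idelic formulation for that step, whereas you spell it out via a Zariski-density argument on $\Res_{K/\Q}\Gm$. One small inaccuracy: $P$ does not have finite index in $K^\times$ itself (only in the subgroup of elements prime to $\frakN$), but your density conclusion is unaffected since $P$ contains the coset $1+\frakN$, which is already Zariski dense in the torus.
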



\begin{proof}
By definition, an algebraic Hecke quasi-character takes values in a number field.  From the idelic formulation, we conclude that the subfield generated by the restriction of $\psi$ to the infinite places is contained in $M$, hence also $\Q(\chi)$.
\end{proof}


\begin{proof}[Proof of \Cref{thm:mainthm}]
We first prove (a).
We compute the bad primes for $X$ by checking if the reduction no longer leads to 15 distinct intersection points of the 6 lines. 
We bound the exponents of the primes using \Cref{lemma:bound}. 
Following Watkins \cite[\S~5.2]{watkins}, using \textsf{Magma} we compute the full list of algebraic Hecke quasi-characters $\psi$ with the required $\infty$-type, conductor bounded as above, and $\Q(\psi) \subseteq K_i$.
More precisely, we start with the principal character $\psi_0$ of the chosen $\infty$‑type and its associated Dirichlet character $\chi_0$ (see \Cref{lemma:bound}).  
Next, we enumerate the Dirichlet characters $\chi$ whose lifts to Hecke characters twist $\psi_0$ to give a primitive character $\psi$ with $\Q(\psi)\subseteq K_i$. 
Concretely, we require that $\chi' \colonequals \chi/\chi_0$
be primitive, trivial on units, and satisfy $\Q(\chi)\subseteq K_i$.
Because all these conditions can be phrased on the abstract character group, we apply the filters there----iterating over every element would be impractical for large levels.  For example, for $X_3$ we consider characters of conductor  $\frakN = \frakp_2^7 \cdot 7 \cdot 11 \cdot \frakp_{19}$, where $\frakp_p$ is the unique prime dividing $p$.
The Dirichlet character group modulo $\frakN$ is isomorphic to
\[(\Z/4 \Z)^5 \oplus (\Z/8 \Z)^2 \oplus (\Z/24\Z)^2 \oplus \Z/ 48 \Z \oplus (\Z/240 \Z)^3 \oplus \Z/5040\Z\]
which contains over $10^{20}$ elements; of these, only \numprint{279\,936} satisfy our requirements.

We then compute $L_p,(\Tl(X),T)$ using a method of Elsenhans--Jahnel \cite{elsenhans-jahnel-16} based on a trace formula involving a matrix expansion.
In the style of Sherlock Holmes, we eliminate all but one (up to the action of $\Gal{K}{\Q}$) by finding primes $p$ uniquely identifying $L_p(\Tl(X),T)=L_p(\psi,T)$. 
It was enough to consider good primes $p < 250$ totally split in $K_i$ to obtain a unique match for each example in a single pass.  After a match was found, we identified a prime $p$ which, together with the conductor, uniquely identifies the character (up to the Galois action).

Part (b) is proven in the same way as part (a).  For part (c), we note that $\HH^2(A) \simeq \tbigwedge^2 \HH^1(A)$, so applying (b) and identifying characters we find that $\HH^2(A) \simeq V_1 \oplus V_2 \oplus V_3$ as representations of $\Galois_{\Q}$, where $V_1 \simeq \Ind_{\Galois_F}^{\Galois_\Q} \Q_\ell(1)$ and $\dim_K V_2=\dim_K V_3=1$.  The relationship between the two characters $\psi_A$ and $\psi_X$ can be further encoded by the equality $\psi_X(\mathfrak{p}) = \psi_A(\sigma_1(\mathfrak{p}))\psi_A(\sigma_2(\mathfrak{p}))$
for all unramified primes $\mathfrak{p}$ of degree $1$ and where $\sigma_1,\sigma_2 \in \mathrm{Gal}(K\,|\,\mathbb{Q})$ are the two elements of order $3$.  We finish as in (a), checking on distinguishing primes.  
\end{proof}

\begin{remark} \label{rmk:kugasatake}
The Kuga--Satake construction \cite{kuga-satake-67} (see also van Geemen \cite[\S 5]{van-geemen-00}) attaches to a complex polarized K3 surface $X$ a complex abelian variety such that there is an embedding $\Tl(X)(1) \hookrightarrow H^1(A) \otimes H^1(A)$ of Hodge structures.  In our case, this relationship is made explicit in \Cref{thm:mainthm}(c) via comparison, in the sense that $X$ and $A$ have associated to Hecke characters $\psi_X$ and $\psi_A$, with $\psi_X$ appearing as a symmetric product of $\psi_A$.  This strongly suggests that $X$ and $A$ are connected via the Kuga--Satake construction, at least up to isogeny and powers.  
\end{remark}

\printbibliography

\end{document}